\begin{document}

\newtheorem{theorem}{Theorem}
\newtheorem*{theorem*}{Theorem}
\newtheorem{conjecture}[theorem]{Conjecture}
\newtheorem*{conjecture*}{Conjecture}
\newtheorem{proposition}[theorem]{Proposition}
\newtheorem{question}[theorem]{Question}
\newtheorem{lemma}[theorem]{Lemma}
\newtheorem*{lemma*}{Lemma}
\newtheorem{cor}[theorem]{Corollary}
\newtheorem*{obs*}{Observation}
\newtheorem{condition}{Condition}
\newtheorem{definition}{Definition}
\newtheorem{proc}[theorem]{Procedure}
\newcommand{\comments}[1]{} 
\def\Z{\mathbb Z}
\def\Za{\mathbb Z^\ast}
\def\Fq{{\mathbb F}_q}
\def\R{\mathbb R}
\def\N{\mathbb N}
\def\C{\mathbb C}
\def\k{\kappa}
\def\grad{\nabla}

\title[$k$-Gauduchon Complex Geometry]{On the Hermitian Geometry of $k$-Gauduchon Orthogonal Complex Structures}
 \author[University of Michigan]{Gabriel Khan} 
 
\email{gabekhan@umich.edu}

\date{\today}

\maketitle

\begin{abstract}

The purpose of this note is to study the complex structures orthogonal to a given Riemannian metric. For another paper on this topic, we highly recommend the work of Salamon \cite{Salamon1}. His work describes in great detail the role that curvature plays in this question. We instead focus on torsion, which lends itself to somewhat different analysis of the problem. 
In terms of novel results, we show that for a certain range of $k$, the spaces of $k$-Gauduchon orthogonal complex structures are pre-compact within the moduli space of complex structures and give a preliminary picture of how certain special complex structures fit into the space of orthogonal complex structures.
\end{abstract}

\section{Introduction}

Given a Riemannian manifold $M^{n}$, an {\em almost complex structure} is a smoothly varying endomorphism of $TM$ (denoted $J$) which satisfies $J^2 = -Id$. Intuitively, $J$ corresponds to the action of multiplication by $i$. We say that an almost complex structure is {\em compatible} with, or {\em orthogonal} to a metric, if it satisfies $g(X,Y) = g(JX,JY)$ for all tangent vectors $X$ and $Y$. Intuitively, this corresponds to the fact that multiplication by $i$ does not change the norm of a vector. In this case, the triple $(M, g, J)$ is called an almost-Hermitian manifold. From the definition of an almost complex structure, it follows that an almost complex manifold must be even dimensional and be oriented, but there are other restrictions as well.

For $(M,g,J)$ to be genuinely complex, there needs to be be a local biholomorphism between our manifold and $\mathbb{C}^{n/2}$. In other words, we want there to be local holomorphic coordinates, in which case we say the almost complex structure is {\em integrable}.
Integrability is a fairly restrictive condition; given an arbitrary almost complex structure $J$, such coordinates may not exist. For instance, there is an orthogonal almost complex structure on the round six-sphere, but it is not integrable. It is an open question whether any $\mathbb{S}^6$ admits a complex structure at all, but there is no complex structure which is close to being orthogonal to the round metric \cite{Salamon1}. Furthermore, no sphere other than $S^2$ or $S^6$ admits any almost complex structures.

Determining when an almost complex structure is integrable at first seems a difficult task, but the Newlander-Nirenberg theorem gives a simple criteria. It states that an almost complex structure is integrable if and only if its {\em Nijinhuis tensor} vanishes \cite{NN}.

The Nijenhuis tensor is defined in the following way, and can be interpreted as the anti-holomorphic part of the Lie bracket of two holomorphic vector fields:
  \[ N_J(X,Y) = -J^2[X,Y] + J [X,JY] + J[JX,Y] - [ JX,JY]. \]

One direction of this theorem is straightforward. Whenever $J$ is induced from complex coordinates, a computation shows that the Nijinhuis tensor must vanish. The other direction is more difficult, and requires some PDE analysis in order to prove.
 
 In fact, it is somewhat of a misnomer to refer to the Newlander-Nirenberg theorem as a singular theorem, as it more accurately describes a collection of results. The result was proven for analytic almost complex structures by Newlander and Nirenberg. The theorem was then extended to $C^{1,\alpha}$ almost complex structures by Malgrange \cite{BM}. For many years, the $C^{1,\alpha}$ result was state of the art. However, recently the Newlander-Nirenberg theorem has been proven in even weaker regularity. In \cite{DMT}, Hill and Taylor showed that the Nijinhuis tensor is well defined and that the Newlander-Nirenberg theorem holds for complex structures which are $C^\alpha$ for $\alpha > 1/2$.


\subsection{The Levi-Civita and Chern Connections}

In order to study the problem of orthogonal complex structures from the perspective of torsion, it is necessary to introduce the notion of torsion. To do so, we discuss two canonical connections on any complex manifold

On any Hermitian manifold, there are many connections of interest (for other examples, see \cite{YZ2}). We will focus on two such connections, the {\em Levi-Civita} and {\em Chern} connection.

 The Levi-Civita connection is the unique metric (i.e. $\nabla g = 0$) connection which is torsion-free. We denote this connection as $\nabla$. In general, it is not the case that $\nabla J \equiv 0$. However, if $J$ is parallel with respect to $\nabla$, we say that the complex structure is {\em K\"ahler}, and this is an important class of complex structures. There are many equivalent ways to define the K\"ahler condition, such as that $J$ is integrable and that the {\em K\"ahler form } $\omega = g(J \cdot, \cdot)$ is closed.


The Chern connection is the unique connection which is metric, complex (i.e. $\nabla J = 0$), and whose torsion is of type $(2,0)$ with respect to $J$. We denote this connection as $\nabla^{c}$. This connection can be defined using the following formula \cite{ABD}: 
\[ g(\nabla^{c}_X Y, Z) = g(\nabla_X Y, Z) - \frac{1}{2} d\omega (JX, Y, Z). \] 

These two connections are the same if and only if the complex structure is K\"ahler. We are primarily interested in the non-K\"ahler case, as much is known about K\"ahler complex structures and K\"ahler geometry in general.

\subsection{The Torsion of the Chern Connection}

 Since the Chern connection is metric, it differs from the Levi-Civita only by a contorsion tensor. Recall that the torsion is defined in the following way.
 
  \[ T^c (X,Y) = \nabla^c_X Y - \nabla^c_Y X - [X,Y] \] 

Much of the geometric information about the complex structure is contained within the torsion, so it is of interest to try to understand this tensor.

 By taking the trace of the full torsion tensor, one can define the torsion one-form $\eta$. In other words, for any unitary frame $\{ e_j \}$, we define  $\eta(X)$ as $\sum_{j=1}^n T^c{}_{Xj}^j$. It is worth noting that the Lie form, which is usually defined as $\theta= J \delta \omega$, is directly related to $\eta$. If we let $\eta^{(1,0)}$ denote the holomorphic part of $\eta$, we have that $\theta_i = -2 \eta_i^{(1,0)} - 2 \overline{ \eta_i{(1,0)}}$ \cite{YZ}.

One possible relaxation of the K\"ahler condition is to insist that the torsion one-form vanishes, in which case the complex metric is referred to as {\em balanced}. This condition was first introduced by Michelson \cite{MLM} and has attracted attention because of its role in non-K\"ahler string theory \cite{FY}.

From a geometric standpoint, curvature is far better understood than torsion. However, torsion is well suited for our analysis because the torsion of the Chern connection controls $\| \nabla J \|$.

%


\begin{lemma*}
There is a $C^1$ estimate on the complex structure in terms of the torsion. 
\[ |\nabla^{L.C.} J| \leq 3 |T^{c}| \]
\end{lemma*}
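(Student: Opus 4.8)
The plan is to route through the contorsion tensor $A := \nabla^{c} - \nabla$ (where $\nabla = \nabla^{L.C.}$), first bounding $A$ pointwise by $T^{c}$ and then bounding $\nabla J$ pointwise by $A$. Everything is algebraic and pointwise; no PDE input is needed.

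First I would record the two structural facts that pin down $A$. Since $\nabla$ and $\nabla^{c}$ are both metric, $g(A_X Y, Z)$ is skew-symmetric in $Y$ and $Z$. Since $\nabla$ is torsion-free, the torsion of the Chern connection is $T^{c}(X,Y) = A_X Y - A_Y X$. Writing this relation for the three cyclic permutations of $(X,Y,Z)$, alternating signs, and collapsing with the skew-symmetry gives the usual recovery formula
\[ g(A_X Y, Z) = \tfrac{1}{2}\bigl( g(T^{c}(X,Y),Z) - g(T^{c}(Y,Z),X) + g(T^{c}(Z,X),Y) \bigr). \]
Expanding $|A|^{2}$ in a unitary frame, applying Cauchy--Schwarz to the three terms on the right, and using that each of the three resulting sums equals $|T^{c}|^{2}$ (the value is insensitive to which pair of slots of $T^{c}$ is antisymmetrized), one obtains the pointwise estimate $|A| \le \tfrac{3}{2}\,|T^{c}|$.

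Next I would use that the Chern connection is complex. Writing $\nabla^{c} = \nabla + A$ and expanding $(\nabla_X J)Y = \nabla_X(JY) - J\nabla_X Y$ in terms of $\nabla^{c}$, the terms involving $\nabla^{c} J$ vanish and one is left with $(\nabla_X J)Y = J\,A_X Y - A_X(JY)$. Because $J$ is a $g$-isometry, $\{Je_j\}$ is again a unitary frame, so after squaring and summing over a frame the two terms each contribute $|A|^{2}$; the cross term is controlled by the elementary inequality $(a+b)^2 \le 2(a^2+b^2)$, yielding $|\nabla J| \le 2|A|$. Combining the two steps gives $|\nabla^{L.C.} J| \le 2 \cdot \tfrac{3}{2}\,|T^{c}| = 3|T^{c}|$, which is the claim.

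The only place that requires genuine care is the bookkeeping of norms in the two Cauchy--Schwarz steps — making sure the tensor (Hilbert--Schmidt) norms, rather than operator norms, are used consistently so that the constants multiply to exactly $3$ rather than something larger. I do not anticipate a real obstacle beyond this: no integrability or curvature information is used, only metricity of both connections, torsion-freeness of the Levi-Civita connection, and $\nabla^{c} J = 0$. One could sharpen the constant by exploiting that $T^{c}$ has type $(2,0)$ and that $\nabla_X J$ anticommutes with $J$, but that refinement is not needed for the stated estimate.
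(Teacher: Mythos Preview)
Your proposal is correct and follows essentially the same route as the paper: express the contorsion $A=\nabla^{c}-\nabla$ as a signed cyclic sum of three torsion terms to get $|A|\le \tfrac{3}{2}|T^{c}|$, then use $\nabla^{c}J=0$ to write $(\nabla_X J)Y = J A_X Y - A_X(JY)$ and conclude $|\nabla J|\le 2|A|$. The paper records only the first step --- it states the identity $-\tfrac{1}{2}d\omega(JX,Y,Z)=\tfrac{1}{2}(\tau_{JX\,Y\,Z}-\tau_{Y\,Z\,JX}+\tau_{Z\,JX\,Y})$, which is your cyclic formula written with $JX$ in place of $X$ via the relation $g(A_X Y,Z)=-\tfrac{1}{2}d\omega(JX,Y,Z)$ --- and leaves the passage from $|A|$ to $|\nabla J|$ implicit; you have simply filled in that second step.
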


To see this, denote $\tau_{ijk} := \sum_{l=1}^n  g_{kl} T^{c}{}_{ij}^l$ and note that the following formula holds:
$$- \frac{1}{2} d\omega (JX, Y, Z)= \frac{1}{2}( \tau_{ JX \, Y \, Z}  -\tau_{  Y \, Z  \, JX} + \tau_{ Z \, JX \, Y}) $$

 This observation motivates the need to understand torsion, and it is of interest to determine conditions which control the torsion. We will focus on one such condition, which is that the complex structure is {\em $k$-Gauduchon}.


\subsection{$k$-Gauduchon complex structures and the Gauduchon curvature}

Originally introduced by Fu, Wang, and Wu \cite{FWW}, a K\"ahler form is said to be {\em $k$-Gauduchon } when the following identity holds.
 \[ i \partial \bar \partial (\omega^k) \wedge \omega^{n-k-1}=0. \]
 When $k=n-1$, this is the condition $\partial \bar \partial \omega^{n-1} = 0$, which is more commonly known as Gauduchon, or standard Gauduchon. Gauduchon showed that for any complex structure $J$ and compatible metric $g$, there is a metric $\tilde g$ conformal to $g$ such that $\tilde \omega = \tilde g(J \cdot, \cdot)$ is Gauduchon \cite{PG}. 

The motivation for our consideration of $k$-Gauduchon complex structures is the following calculation, which establishes control over the torsion for $k$-Gauduchon complex structures.

 A result due to Gauduchon \cite{PG} shows that for any integrable complex structure, the Lie form $\theta$ satisfies the following identity:
\begin{equation} \label{Gauduchon identity}
 |\theta|^2 + 2 \delta \theta =  \frac{2n-2}{2n-1} S - 2   \langle  W ( \omega ) , \omega \rangle 
 \end{equation}

In this equation, $S$ is the scalar curvature, and $W$ is the Weyl curvature, viewed as a map of two forms. Using this, we can define the Gauduchon curvature $Gaud$ as follows:
$$Gaud = \frac{2n-2}{2n-1}S - \langle W(\omega), \omega \rangle.$$

After manipulating this equation using the $k$-Gauduchon condition, we find the following equation for any $k$-Gauduchon complex structure.

 \[ (n-2)Gaud = (2k-n) |\eta|^2 + (n-k-1)| \tau |^2 \]

Whenever $(2k-n), (n-k-1) >0$, this gives us point-wise bounds on torsion in terms of Riemannian curvature alone. The Gauduchon curvature is bounded by the scalar curvature and the smallest eigenvalue of the Weyl tensor acting on two forms. Therefore, we define the $G$-curvature in the following way.

\begin{equation} \label{G curvature}
 G(x) := \sup_{\alpha \in \bigwedge^2 T^*_x M } \left( \frac{2n-2}{2n-1}S(x) - \langle W(\alpha), \alpha \rangle \right)
\end{equation}

By definition, $Gaud \leq G$, and so for a $k$-Gauduchon complex structure,

 \[ (2k-n) |\eta|^2 + (n-k-1)| \tau |^2 \leq (n-2) G. \]

 For more details, a derivation of this was done in \cite{GK}.

\subsection{Weakly $k$-Gauduchon complex structures}
We can extend the notion of $k$-Gauduchon to $C^1$ complex structures, using the previous identity. We say that a complex structure is {\em weakly $k$-Gauduchon} if \[ (n-2)Gaud = (2k-n) |\eta|^2 + (n-k-1)| \tau |^2. \] This definition is equivalent to the standard one for twice-differentiable complex structures, but also holds for of extending once-differentiable complex structures. For weakly $k$-Gauduchon complex structures, the torsion is again bounded by the $G$-curvature. As such, there is a uniform bound on $\| \nabla J \|$. 

\section{The Moduli of Orthogonal Complex Structures}

A central questions in complex geometry is to understand the moduli space of complex structures on a given manifold. Kodaira-Spencer theory shows that the moduli space of complex structures is locally a finite-dimensional analytic space. However, this space may have singularities and its global geometry can be very complicated. We do not expect to have a full picture of the moduli space, except in some special cases.

Instead of considering this general question, we study in moduli of complex structures orthogonal to a given metric. This space may also be complicated and may not have the structure of an analytic space. However, in certain special cases, it is much simpler than the full moduli space. We start with several test cases, to motivate our work and suggest what may be true more generally.



\subsection{Riemann surfaces and Teichm\"uller theory}

For Riemann surfaces, the study of the moduli of complex structures is known as Teichm\"uller theory \cite{JH}. Teichm\"uller theory is a rich field of study; the moduli space has interesting geometry in its own right. A more thorough discussion of this topic would lead us away from our original project, so we will not spend much time discussing Teichm\"uller theory except to note the following facts.

First, given a Riemannian metric on a surface, there is a unique complex structure orthogonal to that metric. As such, the study of complex structures orthogonal to a given metric is rather uninteresting. However, any conformal metric admits the same orthogonal complex structure, so to understand the space of complex structures, we should really consider the set of conformal classes. In every conformal class, there is a unique unit volume metric of constant curvature. As such, these form a natural choice of metrics from which to study the moduli space of complex structures.

For each complex structure, there is a unique metric of constant curvature orthogonal to it. Therefore, there is a bijection between the moduli space of complex structures on a surface and the moduli space of constant curvature metrics on a surface. From this observation, one can start to study the dynamics and geometry of this moduli space. Trying to go in more detail would take us too far from our original project, so we refer to the interested reader to \cite{CM}.

From the simple correspondence between constant scalar curvature metrics and complex structures in the surface case, it seems natural to try to repeat the analysis in higher dimensions and see what holds true.

 \subsection{An example in higher dimensions}

 In higher dimensions, the situation is far less well understood. It turns out that there are some important examples that can be analyzed in detail. These spaces help gain insight into how rich the space of orthogonal complex structures can be. In this section, we will focus on a single example with very rich and interesting geometry. We will consider orthogonal complex structures on the flat six-dimensional tori, which we think of as $\mathbb{R}^6$ modulo a lattice. 

\subsubsection{The K\"ahler complex structures}

Given any flat torus, there is a moduli of $SO(2n)/U(n)$ of K\"ahler complex structures orthogonal to the metric. To construct these complex structures, one considers the set of rotations of $R^{2n}$ and mods out by the ones that induce the same complex structure.

For our example, this induces a $SO(6)/U(3) \cong \mathbb{CP}^3$ family of K\"ahlerian structures on each flat torus \cite{BSV}. As such, we immediately see that the orthogonal complex structures are no longer unique, as in the surface case.

If we vary the flat metric by choosing a different lattice in $\mathbb{R}^6$ to quotient by, then for each choice of lattice, there is a family of K\"ahlerian complex structures.
Due to the following theorem of Baus and Cortes \cite{BC}, any K\"ahler structure on a topological torus is biholomorphic to a standard flat torus. 

\begin{theorem*} Let $X$ be a compact complex manifold which is aspherical with virtually
solvable fundamental group and assume $X$ supports a K\"ahler metric. Then
$X$ is biholomorphic to a quotient of $\mathbb{C}^n$ by a discrete group of complex isometries.
\end{theorem*}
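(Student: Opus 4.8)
The plan is to pin down the fundamental group completely and then invoke the theory of the Albanese map; the point is that asphericity is exactly what forces the Albanese map of a finite cover to be a biholomorphism rather than merely a holomorphic map to a torus.

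Write $\Gamma=\pi_1(X)$ and $n=\dim_{\mathbb C}X$. Being complex, $X$ is canonically oriented, and since it is a closed aspherical $2n$-manifold it is a $K(\Gamma,1)$, so Poincar\'e duality makes $\Gamma$ an orientable Poincar\'e duality group of dimension $2n$; every finite-index subgroup of $\Gamma$ is again such a group. The first and, I expect, main step is to show that $\Gamma$ is \emph{virtually} $\mathbb Z^{2n}$. For this I would combine two inputs: the structure theory of solvable duality groups (Bieri), which shows that a solvable Poincar\'e duality group is virtually polycyclic; and the theorem of Arapura--Nori that a polycyclic group occurring as the fundamental group of a compact K\"ahler manifold is virtually nilpotent, together with the classical fact that a nilpotent K\"ahler group is virtually abelian (equivalently, one may quote Delzant's theorem that any solvable K\"ahler group is virtually nilpotent). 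A virtually abelian Poincar\'e duality group of dimension $2n$ has a finite-index free abelian subgroup which is itself a $\mathrm{PD}(2n)$ group, hence isomorphic to $\mathbb Z^{2n}$. Essentially all the difficulty of the theorem lives here: this is the only place the K\"ahler hypothesis is used in an essential way, and complex aspherical solvmanifolds that are not K\"ahler (Hopf and Inoue surfaces, say) show the conclusion fails badly without it.

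Next let $p\colon\widetilde X\to X$ be the finite covering corresponding to a finite-index subgroup $\Gamma_0\cong\mathbb Z^{2n}$. Then $\widetilde X$ is compact K\"ahler, still aspherical, with $\pi_1(\widetilde X)=\mathbb Z^{2n}$ and hence $b_1(\widetilde X)=2n=2\dim_{\mathbb C}\widetilde X$. Consider the Albanese map $\alpha\colon\widetilde X\to\mathrm{Alb}(\widetilde X)$. On a compact K\"ahler manifold the natural map $H_1(\widetilde X,\mathbb Z)\to H^0(\widetilde X,\Omega^1)^{\ast}$ is injective with image a full lattice $\Lambda$, so $\mathrm{Alb}(\widetilde X)=\mathbb C^{n}/\Lambda$ is an $n$-dimensional complex torus and $\alpha$ induces on $\pi_1$ precisely the identification $\mathbb Z^{2n}\cong\Lambda$. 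Since $\widetilde X$ and the torus $\mathrm{Alb}(\widetilde X)$ are both aspherical, a map inducing an isomorphism on $\pi_1$ is a homotopy equivalence, so $\alpha$ has topological degree $\pm1$; in particular it is surjective and generically one-to-one, i.e.\ bimeromorphic. Since a complex torus contains no rational curves and $\widetilde X$ is K\"ahler, standard results on the fibres of bimeromorphic morphisms force $\alpha$ to have no positive-dimensional fibres, hence to be finite; a finite bimeromorphic morphism onto a smooth (thus normal) variety is a biholomorphism by Zariski's main theorem. Therefore $\widetilde X\cong\mathbb C^n/\Lambda$.

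Finally I would descend along $p$. The deck group $G=\Gamma/\Gamma_0$ is finite and acts freely and holomorphically on $\widetilde X=\mathbb C^n/\Lambda$ with quotient $X$. Every biholomorphism of a complex torus lifts to an affine transformation $z\mapsto Az+b$ of $\mathbb C^n$ with $A\in GL(n,\mathbb C)$ preserving the lattice, so the action of the whole group $\Gamma$ on the universal cover $\mathbb C^n$ is by affine maps whose linear parts form a finite subgroup of $GL(n,\mathbb C)$. Averaging an arbitrary Hermitian inner product over that finite subgroup produces a flat K\"ahler metric on $\mathbb C^n$ for which $\Gamma$ acts, freely and properly discontinuously, by holomorphic isometries. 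Hence $X\cong\mathbb C^n/\Gamma$ is a quotient of $\mathbb C^n$ by a discrete group of complex isometries, which is the assertion.
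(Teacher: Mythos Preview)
The paper does not prove this theorem at all: it is quoted verbatim as a result of Baues and Cort\'es and cited as \cite{BC}, with no argument given. So there is nothing in the paper to compare your proposal against.

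That said, your outline is essentially the Baues--Cort\'es strategy and is sound. A couple of points could be tightened. First, the chain ``solvable $\mathrm{PD}$ group $\Rightarrow$ polycyclic $\Rightarrow$ (K\"ahler) virtually nilpotent $\Rightarrow$ virtually abelian'' is correct, but the last implication is the one that deserves a precise citation: for a torsion-free nilpotent K\"ahler group this follows from formality (Deligne--Griffiths--Morgan--Sullivan) applied to the minimal model of the corresponding nilmanifold, or one can quote Campana directly; Arapura--Nori and Delzant handle the solvable-to-nilpotent step. Second, your justification that the Albanese map is a biholomorphism via ``no rational curves in a torus'' leans on Mori-type statements that are delicate in the non-projective K\"ahler category. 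A cleaner route, already implicit in what you wrote, is purely cohomological: since $\alpha$ is a homotopy equivalence, $\alpha^{\ast}$ is an isomorphism on $H^{2}$, so the K\"ahler class of $\widetilde X$ is $\alpha^{\ast}\gamma$ for some $\gamma$; any curve $C$ in a fibre would then satisfy $\int_{C}\omega_{\widetilde X}=\langle\gamma,\alpha_{\ast}[C]\rangle=0$, contradicting positivity. This immediately gives finiteness, and then Zariski's main theorem finishes as you say. Finally, in the descent step you assert that $\Gamma$ acts on $\mathbb{C}^{n}$ by affine maps; this is true but not automatic (biholomorphisms of $\mathbb{C}^{n}$ need not be affine). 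The reason here is that each deck transformation normalises the lattice of translations, so its derivative is $\Lambda$-periodic, hence constant by Liouville; it would be worth saying this explicitly.
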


As such, if we consider the space of flat unit-volume metrics fibered by the K\"ahler structures orthogonal to them, then this space surjects onto the space of all K\"ahler structures on a torus. Even stronger, in the large of a complex tori, all of the complex structures are again complex tori \cite{FC}. Therefore, this fiber bundle covers a connected component of the Kodaira-Spencer moduli space and we cannot deform a complex torus to a non-K\"ahler complex manifold.

 Naively, we might hope that this surjection gives the moduli space of complex tori the structure of a $SO(6)/U(3)$ fiber-bundle, in which case there would be a close parallel to the picture in Teichm\"uller theory. Unfortunately, this isn't the case. When one considers the space of complex tori as the space of complex lattices modulo those which induce the same complex torus (i.e. $GL_n(\mathbb C) \backslash GL_{2n}(\mathbb R) / GL_{2n}(\mathbb Z)$), then an observation of Siegel shows that the associated quotient space is not even Hausdorff. As such, the desired fiber bundle structure cannot exist.
 
 It is possible to see more concretely how the fiber bundle structure on flat metrics fails to induce a fiber bundle structure on the moduli of complex tori. Given an elliptic curve $E$ and a flat $4$-torus $T^4$, then for any constant $\alpha \neq 1$, the flat torus $E \times T^4$ is distinct from the flat torus $\alpha E \times \frac{1}{\sqrt{\alpha}}T^4$ as Riemannian metric. However, both of these spaces share the complex structures which act on each factor.

\subsubsection{The non-K\"ahler complex structures}

There is another complication to the story of complex structures on the torus in the existence of non-K\"ahler complex structures. This case is important in that we know a full classification of all possible non-K\"ahler structures \cite{KYZ}, which we will use here.

 When the flat metric is a product of an elliptic curve and a four dimensional torus, there are non-K\"ahler ``warped product" complex structures. These examples were originally discovered by Blanchard \cite{Blanchard} and Sommese \cite{Sommese} and studied in the context of twistor spaces by Borisov, Salamon and Viaclovksy \cite{BSV}. All non-K\"ahler complex structures on a flat torus are of this form, so If the metric is not a product of an elliptic curve and a four-dimensional torus, then all orthogonal complex structures are K\"ahler \cite{KYZ}.

For a given product metric, the moduli of non-K\"ahler structures is isomorphic to the space of doubly-periodic maps from the elliptic curve $E$ to the projective plane. This result shows that there are infinitely many connected components of orthogonal complex structures (depending on the degree of the map) but that each connected component orthogonal to a given metric is a compact smooth space.

Before moving on, we should note that not much is known about complex structures on the torus which are not orthogonal to flat metrics. By the earlier theorem of Baus and Cortes, any such example would have to be non-K\"ahler. However, there are currently no known examples. 

Without considering the metric, Catanese has studied the problem and posed some interesting open questions. In particular, he asks whether a complex manifold diffeomorphic to a torus with trivial canonical bundle is biholomorphic to a torus \cite{FC04}. If the answer is affirmative, this implies that the all complex structures on a six torus with trivial canonical bundle are orthogonal to a flat metric.

\subsection{A More General Picture}

From studying flat metrics, it is tempting to think that orthogonal complex structures are in abundance. In fact, the precise opposite is generally true.

\begin{obs*}
Generically, a Riemannian metric does not admit an orthogonal complex structure, and the obstruction is a point-wise generic condition.
\end{obs*}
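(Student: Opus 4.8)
The plan is to reformulate integrability as an overdetermined first-order equation for a section of the twistor bundle, and then to isolate an algebraic obstruction to its solvability that involves only finitely many derivatives of the metric at a point. Recall that the almost complex structures compatible with $g$ at a point $x$ form a copy of $SO(2n)/U(n)$, of real dimension $n(n-1)$, and that globally they are precisely the smooth sections of the associated twistor bundle $\mathcal{Z}\to M$ with this fibre; an orthogonal complex structure is such a section $J$ with $N_J\equiv 0$, and we may as well work locally, where these sections are plentiful. Since the Nijenhuis tensor is antisymmetric and satisfies $N_J(JX,Y)=-J\,N_J(X,Y)$, it takes values in $\Lambda^{0,2}_J\otimes T^{1,0}_J$, so $N_J=0$ amounts to $n^2(n-1)$ real first-order equations imposed on the $n(n-1)$ functions that locally parametrize the section. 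For $n\ge 2$ this is overdetermined, by a factor of $n$; in real dimension two ($n=1$) the fibre is a point, there is nothing to solve, and every surface metric carries its unique orthogonal complex structure --- consistent with the remarks in the introduction.

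The cleanest model of the mechanism is real dimension four. There $J$ is a unit section of the rank-three bundle $\Lambda^+$ of self-dual two-forms, and a standard curvature computation (see \cite{Salamon1}) shows that on a Hermitian four-manifold the self-dual Weyl operator $W^+$ has a repeated eigenvalue, with the fundamental form $\omega_J$ spanning the remaining eigenline; in particular $W^+$ is degenerate at every point. Equivalently, if $W^+$ has three distinct eigenvalues at even one point, then no orthogonal complex structure exists on any neighbourhood of that point. Having distinct eigenvalues is an open dense condition on the trace-free symmetric operator $W^+$, and $W^+$ is a point-wise function of the two-jet of $g$; hence a generic four-dimensional metric admits no orthogonal complex structure, obstructed by the point-wise generic condition that $W^+$ be non-degenerate somewhere.

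For $n\ge 3$ the Weyl tensor no longer pins things down so cleanly, but the same mechanism applies after one prolongation. Expressing $N_J$ through the Levi-Civita connection, as in the formula relating $d\omega$ to $\tau$ above, turns $N_J=0$ into a condition on the one-jet of $J$; covariantly differentiating it, and using the Ricci identity $\nabla^2_{X,Y}J-\nabla^2_{Y,X}J=R(X,Y)\circ J-J\circ R(X,Y)$ to trade the antisymmetric part of the second jet of $J$ for the Riemann tensor, produces a system that is affine-linear in the symmetrized second jet of $J$ and whose inhomogeneous term involves $R(x)$ linearly. Solvability therefore forces $R(x)$ into an affine subspace of the finite-dimensional space of algebraic curvature tensors at $x$, a subspace that depends on the one-jet of $J$; so a germ of an orthogonal complex structure near $x$ can exist only if $R(x)$ lies in the union of these subspaces as the one-jet of $J$ ranges over the solution set of $N_J(x)=0$. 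The claim is that this union is a proper subvariety --- just as $W^+$-degeneracy is proper when $n=2$ --- so that a generic metric, even restricted to a neighbourhood of one point, carries no orthogonal complex structure, the obstruction being a condition on the two-jet of $g$ there. The main obstacle is establishing exactly this: one must compute the rank of the prolonged symbol (equivalently, the relevant Spencer cohomology, i.e.\ Goldschmidt's obstruction, of the linearized Nijenhuis operator), identify its cokernel, and check that the $R$-dependent part of the obstruction is not absorbed for any admissible one-jet of $J$; if the naive counts are inconclusive one prolongs again, each step bringing in higher covariant derivatives of $R$ and hence a point-wise condition on a higher jet of $g$, after which a routine jet-transversality argument promotes non-vacuity to genericity. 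Finally, on a compact manifold the identity \eqref{Gauduchon identity} furnishes a complementary, purely integral obstruction once its divergence term is integrated away.
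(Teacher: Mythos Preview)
The paper's justification of this observation is a single sentence: it cites Gray's identity, which forces the Weyl tensor of any Hermitian manifold to satisfy an explicit pointwise algebraic constraint (phrased in the paper as ``small rank''), and then notes that this constraint is violated by a generic algebraic curvature tensor. Your four-dimensional argument via the degeneracy of $W^+$ is correct and is essentially the four-dimensional instance of Gray's identity, so in that case your approach and the paper's coincide.

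For $n\ge 3$, however, what you have written is a program rather than a proof. You correctly set up the prolongation of $N_J=0$, observe that the Ricci identity injects $R(x)$ into the first prolonged system, and conclude that solvability confines $R(x)$ to a union of affine subspaces parametrized by admissible one-jets of $J$. But you then say explicitly that ``the main obstacle is establishing exactly this,'' namely that this union is proper, and you propose computing Spencer cohomology and iterating the prolongation without carrying any of it out. That is the gap. The whole point of invoking Gray's identity is that it short-circuits this machinery: Gray's identity is a concrete curvature identity, obtained by combining $N_J=0$ with the first Bianchi identity, and one checks directly that it cuts out a proper algebraic subvariety of the space of Weyl tensors in every dimension $2n\ge 4$. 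Your prolongation strategy would, if completed, rediscover exactly this constraint at the first step; the paper simply quotes the known endpoint rather than rederive it via formal integrability theory.
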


This is a consequence of Gray's identity, which shows that the Weyl tensor must have small rank for a metric to admit an orthogonal complex structure \cite{Gray} (see also \cite{YZ}). In fact, there are many curvature conditions which are known to rule out the existence of orthogonal complex structures (for example, see \cite{LHL}). For $4$ manifolds, Salamon studied the curvature restriction in great detail. His work shows that if a pair of orthogonal complex structure exists on a $4$-fold ($J$ and $-J$), generically they are unique. He also showed that if more than two pairs exists, then infinitely many orthogonal complex structures do and the metric is locally anti-self-dual \cite{Salamon1}. All these results suggest that we should expect the richest moduli for special metrics, which we restate in terms of the following conjecture.

 
 \begin{conjecture}
  The dimension and number of complex structures orthogonal to a given metric are upper-semicontinuous functions in terms of the Riemannian metric.
 \end{conjecture}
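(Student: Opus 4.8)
The plan is to prove the conjecture by combining a compactness argument with the upper semicontinuity of dimension in analytic families, carried out first on the locus of (weakly) $k$-Gauduchon orthogonal complex structures with $n/2 < k < n-1$, where the torsion estimates above are available. So let $g_i \to g$ in $C^\infty$ and suppose each $g_i$ carries orthogonal complex structures $J^{(1)}_i,\dots,J^{(N)}_i$ lying in distinct components of its moduli space; the goal is to produce $N$ orthogonal complex structures for $g$ in distinct components, and a family of dimension $\ge d$ for $g$ whenever the $g_i$ carry families of dimension $\ge d$. Since the curvature quantity $G$ defined in \eqref{G curvature} depends continuously on the $2$-jet of the metric, it is uniformly bounded over the compact family $\{g_i\}\cup\{g\}$. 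The weak $k$-Gauduchon identity then yields a uniform bound on $|\eta|$ and $|\tau|$, hence on $|T^c|$, and the estimate $|\nabla^{L.C.} J|\le 3|T^c|$ converts this into a uniform $C^1$ bound on the $J^{(m)}_i$.

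The next step is to pass to the limit. By Arzel\`a--Ascoli the $C^1$-bounded family $\{J^{(m)}_i\}_i$ subconverges in $C^\alpha$ for every $\alpha < 1$ to an endomorphism $J^{(m)}_\infty$, and the closed conditions $(J^{(m)}_\infty)^2 = -\mathrm{Id}$ and $g(J^{(m)}_\infty \cdot, J^{(m)}_\infty \cdot) = g(\cdot, \cdot)$ pass to the limit since $g_i \to g$. Integrability of $J^{(m)}_\infty$ is then obtained from the low-regularity Newlander--Nirenberg theorem of Hill and Taylor: the Nijenhuis tensor is well defined for $C^\alpha$ structures with $\alpha > 1/2$, its vanishing is stable under $C^\alpha$ convergence, and it forces the existence of holomorphic coordinates; an elliptic bootstrap — the integrability system being elliptic once $g$ is smooth, or else passing through Gauduchon's conformal normalization — upgrades $J^{(m)}_\infty$ to a genuine complex structure orthogonal to $g$. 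This is precisely the precompactness asserted in the abstract: the $k$-Gauduchon locus is closed in the moduli space.

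From this precompactness I would deduce the two semicontinuity statements using Kodaira--Spencer theory. Near each $J^{(m)}_\infty$ the full moduli space of complex structures is a finite-dimensional analytic germ, inside which the orthogonal complex structures form a real-analytic (or at least semianalytic) subset cut out by the orthogonality equations; by the compactness step, for $i$ large all of $J^{(1)}_i,\dots,J^{(N)}_i$ sit in an arbitrarily small neighborhood of the finite set $\{J^{(1)}_\infty,\dots,J^{(N)}_\infty\}$ inside $\mathcal{C}(g)$. Upper semicontinuity of dimension is then the elementary fact that the dimension of a semianalytic set dominates the limsup of the dimensions of semianalytic sets converging to it, and upper semicontinuity of the component count follows once one rules out the possibility that two of the $J^{(m)}_i$ coalesce to the same point of $\mathcal{C}(g)$ — for which one needs a local rigidity or unobstructedness input, of the kind that holds for the individual non-K\"ahler components in the flat-torus examples above.

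The main obstacle, and the reason the statement remains a conjecture, is twofold. First, an arbitrary orthogonal complex structure is $k$-Gauduchon for no useful $k$: Gauduchon's theorem attaches to each $J$ a conformal factor making $J$ standard Gauduchon ($k = n-1$), but then $n-k-1 = 0$, so only $|\eta|$ is controlled and not $|\tau|$, and in any case these conformal factors vary with $i$ and need not remain bounded in $C^2$. Obtaining a uniform $C^1$ bound on arbitrary orthogonal complex structures over a compact family of metrics — or exhibiting a counterexample — is the crux of the problem. Second is the coalescence issue: ruling out, or correctly counting with multiplicity, the merging of several orthogonal complex structures of $g_i$ into a single one of $g$ is exactly the mechanism behind Salamon's dichotomy in real dimension four, and a general treatment appears to require a deformation theory for orthogonal — rather than arbitrary — complex structures, which is not presently available. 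I expect the first of these difficulties to be the harder one.
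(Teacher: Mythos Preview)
The statement you are attempting is labeled a \emph{Conjecture} in the paper and is not proved there; the paper offers no proof to compare against. What the paper does establish is the precompactness theorem for $k$-Gauduchon orthogonal complex structures with $n/2<k<n-1$, together with the observation that the conjecture holds for flat six-tori and that Salamon's work nearly settles the component count in real dimension four. Your first two paragraphs essentially reproduce the paper's precompactness argument, and your final paragraph correctly isolates the two genuine obstructions (no torsion control for arbitrary orthogonal complex structures; no mechanism to prevent coalescence of components), so you and the paper agree on why the statement remains open.

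Two points where your write-up overreaches relative to the paper. First, you assert that ``the $k$-Gauduchon locus is closed in the moduli space'' after the Arzel\`a--Ascoli step; the paper is careful to say only \emph{precompact}: the limit $J_\infty$ is an integrable orthogonal complex structure, but there is no argument that it is again (weakly) $k$-Gauduchon, and the paper explicitly leaves this as an open Question. Your appeal to an ``elliptic bootstrap'' to upgrade regularity is likewise not carried out in the paper and would need justification. Second, the deduction in your third paragraph --- that orthogonal complex structures cut out a semianalytic germ inside the Kodaira--Spencer space, and that dimension and component count are therefore upper semicontinuous --- is asserted rather than argued: you would need to exhibit the orthogonality condition as finitely many analytic equations on the Kuranishi slice and then handle the dependence of that slice on the varying base metric $g_i$, neither of which is routine. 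These are exactly the missing ingredients that keep the conjecture a conjecture.
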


 Our earlier analysis shows that this conjecture is true for flat metrics on the six-torus, so it remains to show it when the metric is not flat. 
 For general $4$-folds, Salamon's work is nearly enough to prove that the number of orthogonal complex structures is upper-semicontinuous. His result shows that the number of local orthogonal complex structures in a neighborhood of each point is upper-semicontinuous, but there can be global obstructions to integrability, as well.
 
 This conjecture has a corresponding version in general deformation theory, where the dimension of the space of holomorphic deformations of a fiber is upper semi-continuous in the Zariski topology. This upper semi-continuity holds in much greater generality and for a more complete version, we refer the reader to III.11 of Hartshorne \cite{hartshorne}.

There are two things that should be pointed out here. Firstly, the conjectured upper semi-continuity does not hold if one allows the metric to collapse (for instance, it fails if one collapses a 3-dimensional torus to an elliptic curve), so it is necessary to assume the metrics are non-collapsing. Secondly, we do not quotient out holomorphic isometries in the statement of this conjecture; such complex structures are considered distinct.

\subsection{Stability under deformations}

One of the key results of Kodaira-Spencer theory is that a small deformation of a K\"ahler complex structure is K\"ahler \cite{KS}. However, for non-K\"ahler complex structures, determining which properties are deformation stable is often a difficult problem. 

One open question is whether the deformation of a BSV-tori is a BSV-tori. Due to the following result of Catanese \cite{FC}, we know that BSV-tori are deformation open.

\begin{theorem*}[\cite{FC} 5.9]
A small deformation of a Sommese-Blanchard 3-fold with large enough degree is again a Blanchard-Calabi 3-fold.
\end{theorem*}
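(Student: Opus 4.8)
The plan is a Kodaira--Spencer deformation argument: realize the given Sommese--Blanchard $3$-fold $X$ as a member of the family of all Blanchard--Calabi structures of the same degree, show the Kodaira--Spencer map of that family is surjective at $X$, and invoke the completeness theorem. So let $X$ be a Sommese--Blanchard $3$-fold of degree $d$, presented (as in the Blanchard--Calabi construction) as a holomorphic torus fibration $\pi\colon X\to E$ over an elliptic curve with $2$-dimensional complex torus fibers, whose twisting is encoded by a holomorphic map $f\colon E\to\mathbb{CP}^2$ with $\deg f=d$ together with the period data of the fibers. First I would assemble the Blanchard--Calabi $3$-folds of degree $d$ near $X$ into an analytic family $\mathcal{X}\to P$, where $P$ varies, in three independent blocks: (i) the modulus of the base curve $E$; (ii) nearby degree-$d$ holomorphic maps $E\to\mathbb{CP}^2$; and (iii) the period data of the fibration. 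For $d$ large, block (ii) is smooth of dimension $h^{0}(E,f^{*}T\mathbb{CP}^2)=3d$ by Riemann--Roch (the obstruction $h^{1}(E,f^{*}T\mathbb{CP}^2)=h^{0}(E,f^{*}\Omega^{1}_{\mathbb{CP}^2})$ vanishing for $f$ of large degree), and blocks (i) and (iii) are smooth of fixed dimension, so $P$ is smooth of some dimension $N(d)$.

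The core step is to compute $h^{1}(X,\Theta_X)$, show it equals $N(d)$, and match the Kodaira--Spencer map of $\mathcal{X}\to P$ to $T_{[X]}P$ block by block. I would use the relative tangent sequence
\[ 0 \longrightarrow \Theta_{X/E} \longrightarrow \Theta_X \longrightarrow \pi^{*}\Theta_E \longrightarrow 0 \]
together with the Leray spectral sequence for $\pi$. Since the fibers are complex tori, $\Theta_{X/E}$ is fiberwise trivial, so $\pi_{*}\Theta_{X/E}$, $R^{1}\pi_{*}\Theta_{X/E}$ and $R^{2}\pi_{*}\Theta_{X/E}$ are locally free on $E$ of ranks $2,4,2$, built out of $f$ and the period data, with degrees growing linearly in $d$; and $\pi^{*}\Theta_E=\mathcal{O}_E$ contributes a line in degrees $0$ and $1$ (the deformations of $E$). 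For $d$ large the positive and negative parts of these sheaves separate, forcing $H^{1}(E,\pi_{*}\Theta_{X/E})$ and the remaining $R^{\bullet}\pi_{*}$ contributions to take their generic values; moreover $H^{2}(E,-)=0$ automatically since $\dim E=1$, which kills the Leray differential $d_{2}$ landing in it. Collecting terms, $H^{1}(X,\Theta_X)$ becomes the direct sum of a line (deforming $E$), a $3d$-dimensional space $H^{0}(E,f^{*}T\mathbb{CP}^2)$ (deforming $f$), and the period-deformation space, and each summand is exactly the image of one block of $T_{[X]}P$; hence the Kodaira--Spencer map is an isomorphism.

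Surjectivity of the Kodaira--Spencer map is precisely the hypothesis of the completeness theorem of Kodaira and Spencer, so $\mathcal{X}\to P$ is complete at $[X]$: every small deformation of $X$ is induced by a local map into $P$, hence is again a Blanchard--Calabi $3$-fold of degree $d$. One need not establish unobstructedness separately, although the same Leray computation shows $H^{2}(X,\Theta_X)$ is consistent with a smooth Kuranishi space of dimension $N(d)$. I expect the main obstacle to be the cohomology computation of the middle paragraph --- identifying the bundles $R^{i}\pi_{*}\Theta_{X/E}$ on $E$ and their degrees explicitly, handling the Leray edge maps, and, crucially, showing that the dimension count stabilizes to $N(d)$ only once $d$ is large, since for small $d$ the space of degree-$d$ maps $E\to\mathbb{CP}^2$ fails the Riemann--Roch dimension and/or the direct images $R^{1}\pi_{*}$ jump, producing extra classes in $H^{1}(X,\Theta_X)$ that are not Blanchard--Calabi directions. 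A secondary point is verifying that the three blocks of $P$ are genuinely independent and yield pairwise non-isomorphic complex structures, which uses the non-K\"ahlerness of $X$ and the rigidity, for large $d$, of the ``base plus map'' data.
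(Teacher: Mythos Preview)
The paper does not prove this theorem: it is quoted verbatim as Theorem~5.9 of Catanese \cite{FC} and is used as an input, with no argument supplied. There is therefore nothing in the present paper to compare your proposal against.

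For what it is worth, your overall strategy---assemble the Blanchard--Calabi $3$-folds of fixed degree into an analytic family, compute $H^{1}(X,\Theta_X)$ via the Leray spectral sequence of $\pi\colon X\to E$ and the relative tangent sequence, and invoke the Kodaira--Spencer completeness theorem once the Kodaira--Spencer map is seen to be surjective---is exactly the standard approach and is the one Catanese follows in the cited reference. Two caveats on the details. First, in the setting of this paper the twisting datum of a BSV-torus is a doubly periodic map $f\colon E\to\mathbb{CP}^1$ (see the torsion formula a few paragraphs below the theorem), not $\mathbb{CP}^2$; your Riemann--Roch counts and the identification of block~(ii) should be adjusted accordingly, so $h^{0}(E,f^{*}T\mathbb{CP}^1)=2d$ rather than $3d$. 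Second, Catanese's Blanchard--Calabi construction is phrased in terms of a rank-two holomorphic bundle $W$ on the base with a rank-four lattice subbundle, rather than directly as ``map plus period data''; translating between the two packagings is routine but needs to be done before the Leray bookkeeping can be made precise. Your identification of where the ``large degree'' hypothesis enters---stabilizing the dimensions of the direct images $R^{i}\pi_{*}\Theta_{X/E}$ and forcing vanishing of spurious $H^{1}$ contributions---is correct in spirit.
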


In our context, this immediately imples that a small deformation of a BSV-tori is again a BSV-tori. This does not show that BSV-tori are deformation closed, so does not answer the global question. However, if one instead considers torus bundles where over a curve of genus $g \geq 2$ instead of an elliptic curve, then all deformations of these spaces are of the same type \cite{FC04}, so a similar result is known for a similar family of complex three-folds.

For special complex structures, various results are known about the stability properties. For instance, the balanced condition ($d \omega^{n-1}=0$) is not necessarily stable under deformations \cite{AB} but if a weaker version of the $\partial \bar \partial$ lemma holds, the balanced condition is stable \cite{AU}. Similarly, Popovici showed that the strongly Gauduchon property ($ \partial \omega^{n-1}$ is $ \bar \partial$ exact) is open under holomorphic deformations \cite{DP}. The standard Gauduchon (i.e. $k=n-1$) condition is also an open condition as all complex structures are Gauduchon for some compatible metric. It remains an open question as to whether the $k$-Gauduchon condition is open under deformation for other $k$.

It is worth noting that orthogonality to a metric is closed and not open condition.  Closure can be seen since $D_J(X,Y)= g(X,Y)-g(JX,JY)$ is continuous in $J$ for any choice of smooth vector fields $X$ and $Y$. For a simple example that shows that it is not open, the deformation of an elliptic curve is not generally compatible with the original metric.

\subsection{Torsion estimates}

In order to study orthogonal complex structures, a natural approach is to try to estimate the torsion in terms of the Riemannian geometry alone. Unfortunately, this is not possible. The example of the BSV-tori show that we can find a complex structure on a flat metric whose torsion tensor is arbitrarily large in $L^2$-norm. If we let $f$ be the doubly periodic map from the elliptic curve to $\mathbb{CP}^1$ and $v_2$ be the volume of the four-torus $M_2$, then we have the following formula \cite{KYZ}:

\begin{equation*}
\int_M |T^c|^2dv = 32\pi \, v_2 \, \text{deg}(f), \label{total torsion norm}
\end{equation*}

From this, one can make the $L^2$ norm of the torsion arbitrarily large by increasing the degree of the doubly-periodic map. These examples show that the Riemannian geometry alone does not control the torsion or the size of the space of orthogonal complex structures. It's worth noting that the BSV tori are all balanced, which shows that the balanced condition is not enough to control the full torsion tensor. 

Trying to estimate the torsion is not completely hopeless. For a general complex manifold, there is an $L^2$ estimate on the torsion one-form by integrating Gauduchon's identity for the torsion one-form \ref{Gauduchon identity} \cite{PG}. 
 Furthermore, with the stronger assumption that the complex structure is $k$-Gauduchon with $n/2<k < n-1$, Gauduchon's identity yields stronger estimates on the torsion tensor, which we can use to try to control the geometry.

\subsection{K\"ahler and $k$-Gauduchon complex structures}

One natural question that we can ask is when the space of orthogonal complex structures is compact. From the preceding discussion on flat tori, we see that it is possible for the space to be very large. However, for special complex structures, compactness results do hold.

As a simple example, we can consider the K\"ahler orthogonal complex structures. Given a compact Riemannian manifold, if there is a sequence of K\"ahler orthogonal complex structures $J_i$, then all of these structures satisfy all of the complex structures satisfy $\nabla J_i \equiv 0$. As such, we can use the Arzela-Ascoli theorem to find a subsequence which converges in $C^1$\footnote{To obtain $C^1$ convergence, have to use the fact that $\nabla^2 J_i \equiv 0$ as well}. Therefore, the limiting almost complex structure is both integrable and K\"ahler. This immediately implies the following result.

\begin{theorem*}
Let $(M^n, g)$ be a compact Riemannian manifold. The set of K\"ahler complex structures orthogonal to $g$ is compact in the space of all complex structures.
\end{theorem*}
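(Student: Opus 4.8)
The plan is to prove sequential compactness directly, formalizing the sketch indicated above; the only non-formal ingredient will be the Newlander--Nirenberg theorem, used to upgrade formal integrability of the limit to an honest complex structure. So let $\{J_i\}$ be a sequence of K\"ahler complex structures orthogonal to $g$. I regard each $J_i$ as a section of $\mathrm{End}(TM)$ and differentiate using the fixed background connection $\nabla$ (the Levi-Civita connection of $g$). The hypotheses give, for every $i$, that the pointwise norm $|J_i|_g$ equals the constant $\sqrt{n}$ (an orthogonal endomorphism with $J_i^2 = -\mathrm{Id}$), and that $\nabla J_i \equiv 0$, hence also $\nabla^2 J_i \equiv 0$. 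Thus $\{J_i\}$ is uniformly bounded in $C^2(\mathrm{End}(TM))$ on the compact manifold $M$.

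First I would apply Arzel\`a--Ascoli to extract a subsequence, not relabeled, converging in $C^1(\mathrm{End}(TM))$ to a limit $J_\infty$; equicontinuity of $J_i$ and of $\nabla J_i$ is immediate from the uniform $C^2$ bound (or, more directly, from the fact that a parallel tensor is moved by parallel transport, which has a modulus of continuity depending only on $g$). Next I would check that $J_\infty$ is again an orthogonal almost complex structure: the conditions $J_\infty^2 = -\mathrm{Id}$ and $g(J_\infty X, J_\infty Y) = g(X,Y)$ are closed and $C^0$ convergence suffices to pass them to the limit; in particular $J_\infty$ is nondegenerate, so it really is an almost complex structure. Since the convergence is in $C^1$, we also obtain $\nabla J_\infty = \lim_i \nabla J_i = 0$, so $J_\infty$ is parallel and therefore smooth.

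Then I would show $J_\infty$ is integrable and K\"ahler. Writing the Nijenhuis tensor and using that $\nabla$ is torsion-free to replace each Lie bracket $[X,Y]$ by $\nabla_X Y - \nabla_Y X$, one gets that $N_{J_\infty}$ is a bilinear expression in $\nabla J_\infty$, so $\nabla J_\infty \equiv 0$ forces $N_{J_\infty} \equiv 0$; by Newlander--Nirenberg, $J_\infty$ is an integrable complex structure. Moreover $\nabla \omega_\infty = 0$ for $\omega_\infty = g(J_\infty\cdot,\cdot)$ (since $\nabla g = 0$ and $\nabla J_\infty = 0$), hence $d\omega_\infty = 0$, so $J_\infty$ is K\"ahler and orthogonal to $g$. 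This shows every sequence in the set has a subsequence converging, in the ambient topology, to a point of the set, which gives both closedness and compactness in the $C^1$ --- equivalently, after bootstrapping from $\nabla J_\infty = 0$, the $C^\infty$ --- topology on almost complex structures.

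I do not expect a serious obstacle: this is essentially a compactness-of-parallel-tensors statement, and one could bypass Arzel\`a--Ascoli altogether by noting that evaluation at a fixed point $p \in M$ embeds the space of parallel sections of $\mathrm{End}(TM)$ into the finite-dimensional space $\mathrm{End}(T_pM)$, so that the set of orthogonal parallel almost complex structures is a bounded subset of a finite-dimensional vector space cut out by closed conditions, hence compact. In either presentation the only point that needs genuine input is invoking Newlander--Nirenberg (equivalently, the standard fact that a parallel almost complex structure is automatically K\"ahler) to ensure the limit $J_\infty$ lies in the moduli space of \emph{genuine} complex structures rather than merely almost complex ones. The mild subtlety worth stating carefully is which topology ``the space of all complex structures'' is given and why $C^1$ convergence of the $J_i$ is enough to converge within it.
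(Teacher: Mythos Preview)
Your proof is correct and follows essentially the same approach as the paper: use $\nabla J_i \equiv 0$ (and $\nabla^2 J_i \equiv 0$) to get uniform $C^2$ bounds, apply Arzel\`a--Ascoli to extract a $C^1$-convergent subsequence, and observe that the limit is again parallel hence integrable and K\"ahler. You supply considerably more detail than the paper's sketch, and your closing remark---that evaluation at a point identifies parallel orthogonal almost complex structures with a closed bounded subset of the finite-dimensional space $\mathrm{End}(T_pM)$---is a cleaner alternative the paper does not mention.
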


 Without too much difficulty, this argument can be extended to a sequence of Riemannian metrics, so long as the metrics are bounded in $C^2$-sense and are non-collapsing. Since any small deformation of a K\"ahler complex structure is again K\"ahler \cite{KS}, this shows that the singularities of the moduli space occur only when the compatible metrics also become singular.  These results are in stark contrast to the work of Hironaka \cite{HH}, which showed that the K\"ahler property is not necessarily closed under holomorphic deformations. 
 
Here, the Riemannian geometry is able to detect the singularities, so even though we may be able to deform a K\"ahler complex structure to a non-K\"ahler one, we cannot do so without affecting the Riemannian geometry. With the model of K\"ahler structures as a guide, we wish to recreate this analysis for $k$-Gauduchon complex structures.

 \begin{theorem*}
Let $(M^n, g)$ be a compact Riemannian manifold. For $n/2 < k < n-1$, the set of $k$-Gauduchon complex structures orthogonal to $g$ is pre-compact in the space of all complex structures orthogonal to $g$.
\end{theorem*}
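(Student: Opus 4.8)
The plan is to promote the pointwise torsion bound available for $k$-Gauduchon structures to a uniform $C^1$ bound on $J$, extract a subsequential limit by Arzel\`a--Ascoli, and then verify that this limit is a genuine complex structure orthogonal to $g$; the delicate point will be integrability in the limit. First, since $M$ is compact the $G$-curvature is bounded above, $\sup_M G =: G_0 < \infty$, being a continuous function of the curvature of the fixed metric $g$ (it equals $\tfrac{2n-2}{2n-1}S$ minus the least eigenvalue of the Weyl operator on $2$-forms). For any $k$-Gauduchon orthogonal complex structure $J$ with $n/2 < k < n-1$ the pointwise inequality
\[ (2k-n)\,|\eta|^2 + (n-k-1)\,|\tau|^2 \;\leq\; (n-2)\,G \;\leq\; (n-2)\,G_0 \]
holds, and in this range both coefficients $2k-n$ and $n-k-1$ are strictly positive; hence $|T^c|^2 = |\tau|^2 \leq \tfrac{(n-2)G_0}{n-k-1}$ everywhere, with a bound depending only on $n$, $k$ and $g$. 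Feeding this into the Lemma $|\nabla^{L.C.}J| \leq 3|T^c|$ gives a uniform bound $|\nabla^{L.C.}J| \leq C_0 = C_0(n,k,g)$ for every such $J$; and since each $J$ is $g$-orthogonal with $J^2 = -Id$ it is a pointwise isometry of $(T_xM,g)$, so $|J|^2 \equiv n$.

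Next I would apply Arzel\`a--Ascoli. Regarding the $J$'s as sections of $\mathrm{End}(TM)$, the bounds $|J| \equiv \sqrt n$ and $|\nabla^{L.C.}J| \leq C_0$ make the family uniformly bounded in $C^0$ and uniformly Lipschitz, hence equicontinuous; so any sequence $(J_i)$ of $k$-Gauduchon orthogonal complex structures has a subsequence converging uniformly to some $J_\infty$, and interpolating against the uniform Lipschitz bound the convergence can be arranged in $C^{0,\alpha}$ for every $\alpha < 1$, in particular for some $\alpha \in (\tfrac12,1)$, with $J_\infty$ itself Lipschitz. Passing to the limit in the pointwise identities $J_i^2 = -Id$ and $g(J_iX,J_iY) = g(X,Y)$ (orthogonality being a closed condition, as already noted) shows $J_\infty^2 = -Id$ and that $J_\infty$ is $g$-orthogonal, so $J_\infty$ is at least an orthogonal almost complex structure.

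The remaining step, which I expect to be the main obstacle, is to show that $J_\infty$ is integrable. One cannot pass to the limit in $N_{J_i} \equiv 0$ naively, because $N_J$ involves one derivative of $J$ while the strong convergence is only $C^{0,\alpha}$. Instead, the uniform bound $|\nabla^{L.C.}J_i| \leq C_0$ lets us pass to a further subsequence with $\nabla^{L.C.}J_i \rightharpoonup \nabla^{L.C.}J_\infty$ weak-$*$ in $L^\infty$ (the weak-$*$ limit is forced to equal $\nabla^{L.C.}J_\infty$ by uniqueness of distributional limits). Writing $N_J$ schematically as a finite sum of terms, each linear in $\nabla^{L.C.}J$ with coefficients polynomial in $J$, the strong $C^0$ convergence of $J_i$ together with the weak-$*$ convergence of the derivatives yields $N_{J_i} \rightharpoonup N_{J_\infty}$ weak-$*$ in $L^\infty$; since $N_{J_i} \equiv 0$ we conclude $N_{J_\infty} = 0$ almost everywhere, hence as a distribution. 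Invoking the low-regularity Newlander--Nirenberg theorem of Hill and Taylor \cite{DMT} --- valid for $C^\alpha$ almost complex structures with $\alpha > 1/2$ and vanishing Nijenhuis tensor --- $J_\infty$ admits local holomorphic coordinates and so is a genuine complex structure orthogonal to $g$, which establishes the claimed pre-compactness. Everything else is a routine packaging of the torsion estimate and Arzel\`a--Ascoli; the genuine difficulty is this loss of one derivative, which forces the weak-$*$ compactness argument together with the sharp low-regularity version of Newlander--Nirenberg.

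Finally, two remarks. Passing to the limit in the identity $(n-2)Gaud = (2k-n)|\eta|^2 + (n-k-1)|\tau|^2$ shows that $J_\infty$ is weakly $k$-Gauduchon in the sense defined above, so the limit stays inside the natural enlargement of the class under consideration. Moreover, the entire argument goes through verbatim for a sequence of metrics $g_i \to g$ that is bounded in the $C^2$-sense and uniformly non-collapsing, exactly paralleling the corresponding extension of the K\"ahler case.
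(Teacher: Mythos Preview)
Your main argument is correct and follows the same route as the paper: the $k$-Gauduchon identity with $n/2<k<n-1$ gives a pointwise torsion bound, the lemma $|\nabla J|\le 3|T^c|$ upgrades this to a uniform $C^1$ bound, Arzel\`a--Ascoli yields $C^{0,\alpha}$ subconvergence for some $\alpha>1/2$, and the low-regularity Newlander--Nirenberg theorem of Hill--Taylor gives integrability of the limit. Your weak-$*$ argument for $N_{J_\infty}=0$ is a welcome addition: the identity $N_J(X,Y)=J(\nabla_XJ)Y-J(\nabla_YJ)X-(\nabla_{JX}J)Y+(\nabla_{JY}J)X$ is indeed linear in $\nabla J$ with coefficients polynomial in $J$, so strong $C^0$ convergence of $J_i$ paired with weak-$*$ $L^\infty$ convergence of $\nabla J_i$ does force $N_{J_i}\rightharpoonup N_{J_\infty}$; the paper simply asserts that the formal Nijenhuis tensor of the limit vanishes and defers to \cite{DMT}.

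However, your first concluding remark is not justified. The weakly $k$-Gauduchon identity $(n-2)\,Gaud=(2k-n)|\eta|^2+(n-k-1)|\tau|^2$ is \emph{quadratic} in $\nabla J$ (both $|\eta|^2$ and $|\tau|^2$ are), and weak-$*$ limits do not commute with quadratic expressions; from $\nabla J_i\rightharpoonup\nabla J_\infty$ weak-$*$ in $L^\infty$ you only get lower semicontinuity of the $L^2$-type norms, not the pointwise equality. The paper is explicit about this obstruction: it notes that $J_\infty$ is Lipschitz but that the convergence is not strong enough to carry the $k$-Gauduchon condition across, and it leaves compactness (as opposed to pre-compactness) of the $k$-Gauduchon orthogonal complex structures as an open question. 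So drop that remark, or replace it by the weaker statement that $J_\infty$ is Lipschitz and satisfies the corresponding inequality with $\le$ in an integrated sense.
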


\begin{proof}

We will actually prove a slightly stronger result in terms of a sequence of Riemannian metrics, not just a single one. The key observation used is that if $J$ is $k$-Gauduchon for some $k$ with $n/2 < k < n-1$, then we have a bound on the entire torsion tensor of $J$. From the earlier lemma, this provides a bound on $ \| \nabla^{L.C.} J \|$ in terms of the curvature of $g$.

Consider a sequence of metrics $g_i$ whose injectivity radius is uniformly bounded from below and which converge to a smooth metric tensor $g_\infty$ in $C^2$ norm. Given a sequence of weakly $k$-Gauduchon complex structures $J_i$ compatible with $g_i$, this sequence is of $J_i$ is equibounded in $C^1$-norm. Therefore, the sequence of $J_i$ is compact in $C^r$ for $\frac{1}{2} < r < 1$ . By the Arzela-Ascoli theorem, $J_i$ converges in $C^r$ along a subsequence to some $J_\infty$ compatible to $g_\infty$. Since all of the $J_i$ are integrable, their Nijenhuis tensors all vanish and so the formal Nijinhuis tensor of $J_\infty$ must also vanish using the results of Denson and Taylor \cite{DMT}. This furthermore shows that $J_\infty$ is integrable.
\end{proof}

It is possible to refine our subsequence so that $J_\infty$ is Lipschitz, but the complex structures may not converge in the Lipschitz sense, so this does not show that $k$-Gauduchon structures are compact. In order to show this, we must be able to extract a subsequence that converges to an orthogonal weakly $k$-Gauduchon complex structure. The most natural ways to do this would be either to extend the definition of $k$-Gauduchon to $C^r$ complex structures for $r<1$ or show that the $k$-Gauduchon condition induces more regularity than a Lipschitz estimate.  At present, we see no way to do so, and so leave it as a question.

 \begin{question}
Let $(M^n, g)$ be a compact Riemannian manifold. For $n/2 < k < n-1$, are the set of $k$-Gauduchon complex structures orthogonal to $g$ compact?
\end{question}

 This theorem does not always hold when $k=n-1$, as shown by the BSV tori.

Before moving on, we briefly mention the case where a complex structure is $k$-Gauduchon for multiple values of $k$. If a complex structure is $k$-Gauduchon for two distinct $k_1$ and $k_2$, then it satisfies $2 \| \eta\|^2 = \| T \|^2 = 2G$. In this case, then the complex structure is immediately $k$-Gauduchon for all $k$. Non-K\"ahler examples of such complex structures were constructed by Latorre and Ugarte \cite{LU}.

\subsection{A Preliminary Picture of the Moduli}

At this point, we can present the following preliminary picture of the moduli space of orthogonal complex structures. Given a compact Riemannian manifold $(M^n,g)$, we let \{ OCS \} be the set of orthogonal complex structures. From the previous results, we have the following inclusions and properties.

\[
\textrm{ \{ K\"ahler OCS\} } \subset \textrm{ \{ $k$-Gauduchon OCS for all $k$ \} } \subset \textrm{ \{$k$-Gauduchon OCS} \}_{ k = 1,\ldots ,n-1 } 
\]
\[
\subset \{ \textrm{ OCS} \} \subset \{ \textrm{ Orthogonal  Almost Complex Structures} \}
\]

The orthogonal almost complex structures on a compact manifold may form an an infinite dimensional manifold\footnote{The set of orthogonal almost complex structures can also be empty.}. When non-empty, the orthogonal complex structures will form a locally finite dimensional subset of this larger space. It is possible that the standard Gauduchon orthogonal complex structures are exactly the same as the orthogonal complex structures, and that both have infinitely many connected components of arbitrary high dimension, so no compactness results hold in general for these two subsets. For $n/2 < k < n-1$, the set of $k$-Gauduchon orthogonal complex structures is precompact in OCS, so form a much smaller subset of complex structures. For any pair $k_1$ and $k_2$, the intersection of $k_1$-Gauduchon OCS and $k_2$-Gauduchon OCS are complex structures that are $k$-Gauduchon for all $k$. The set \{K\"ahler OCS\} forms a compact subset of \{$k$-Gauduchon OCS for all $k$ \}, but the latter may contain non-K\"ahler elements as well.

\section{Acknowledgments}

I would like to thank Fangyang Zheng for his guidance Kori Khan for her help editing. This work was partially supported by DARPA/ARO Grant W911NF-16-1-0383 (PI: Jun Zhang, University of Michigan).

\bibliography{bibfile}

\begin{thebibliography}{FWW13}

\bibitem[AB90]{AB}
Lucia Alessandrini and Giovanni Bassanelli.
\newblock Small deformations of a class of compact non-{K}\"ahler manifolds.
\newblock {\em Proceedings of the American Mathematical Society},
  109(4):1059--1062, 1990.

\bibitem[ABD13]{ABD}
Adrian Andrada, Maria~Laura Barberis, and Isabel Dotti.
\newblock Complex connections with trivial holonomy.
\newblock {\em Lie Groups: Structure, Actions, and Representations}, pages
  25--39, 2013.

\bibitem[AU17]{AU}
Daniele Angella and Luis Ugarte.
\newblock On small deformations of balanced manifolds.
\newblock {\em Differential Geometry and its Applications}, 54:464--474, 2017.

\bibitem[BC06]{BC}
Oliver Baues and Vicente Cort{\'e}s.
\newblock Aspherical {K}\"ahler manifolds with solvable fundamental group.
\newblock {\em Geometriae Dedicata}, 122(1), 2006.

\bibitem[Bla53]{Blanchard}
Andr\'e Blanchard.
\newblock Recherche de structures analytiques complexes sur certaines
  vari{\'e}t{\'e}s.
\newblock {\em C. R. Acad. Sci. Paris}, 238:657--659, 1953.

\bibitem[BSV11]{BSV}
Lev Borison, Simon Salamon, and Jeff Viaklovsky.
\newblock Twistor geometry and warped product orthogonal complex structures.
\newblock {\em Duke Mathematical Journal}, 156(1):125--166, 2011.

\bibitem[Cat02]{FC}
Fabrizio Catanese.
\newblock Deformation types of real and complex manifolds.
\newblock {\em Contemporary trends in algebraic geometry and algebraic
  topology}, pages 195--238, 2002.

\bibitem[Cat04]{FC04}
Fabrizio Catanese.
\newblock Deformation in the large of some complex manifolds, i.
\newblock {\em Annali di Matematica Pura ed Applicata (1923-)},
  183(3):261--289, 2004.

\bibitem[FWW13]{FWW}
Jixiang Fu, Zhizhang Wang, and Damin Wu.
\newblock Semilinear equations, the $\gamma_k$ function, and generalized
  {G}auduchon metrics.
\newblock {\em Journal of the European Mathematical Society}, 15(2):659--680,
  2013.

\bibitem[FY08]{FY}
J-X Fu and S-T Yau.
\newblock The theory of superstring with flux on non-{K}\"ahler manifolds and
  the complex {M}onge-{A}mper\'e equation.
\newblock {\em Journal of Differential Geometry}, 78(3):369--428, 2008.

\bibitem[Gau95]{PG}
Paul Gauduchon.
\newblock Structures complexes sur une variet\'e conforme de type negative.
\newblock {\em Complex Analysis and Geometry, Lect. Notes Pure Appl. Math},
  173, 1995.

\bibitem[Gra76]{Gray}
Alfred Gray.
\newblock Curvature identities for {H}ermitian and almost {H}ermitian
  manifolds.
\newblock {\em Tohoku Mathematical Journal, Second Series}, 28(4):601--612,
  1976.

\bibitem[Har77]{hartshorne}
Robin Hartshorne.
\newblock Algebraic geometry; graduate texts in mathematics, 1977.

\bibitem[Hir62]{HH}
Heisuke Hironaka.
\newblock An example of a non-{K}\"ahlerian complex-analytic deformation of
  {K}\"ahlerian complex structures.
\newblock {\em Annals of Mathematics}, pages 190--208, 1962.

\bibitem[Hl00]{LHL}
Luis Hern{\'a}ndez-lamoneda.
\newblock {C}urvature vs. almost {H}ermitian structures.
\newblock {\em Geometriae Dedicata}, 79(2):205--218, 2000.

\bibitem[HT03]{DMT}
C~Denson Hill and Michael Taylor.
\newblock Integrability of rough almost complex structures.
\newblock {\em The Journal of Geometric Analysis}, 13(1):163--172, 2003.

\bibitem[Hub16]{JH}
John~H. Hubbard.
\newblock {\em {T}eichm{\"u}ller theory and applications to geometry, topology,
  and dynamics.}
\newblock Matrix Editions, 2016.

\bibitem[Kha16]{GK}
Gabriel Khan.
\newblock Eigenvalues of the complex {L}aplacian on compact non-{K}\"ahler
  manifolds.
\newblock {\em Annals of Global Analysis and Geometry}, pages 1--17, 2016.

\bibitem[KS60]{KS}
Kunihiko Kodaira and Donald~Clayton Spencer.
\newblock On deformations of complex analytic structures, iii. {S}tability
  theorems for complex structures.
\newblock {\em Annals of Mathematics}, pages 43--76, 1960.

\bibitem[KYZ17]{KYZ}
Gabriel Khan, Bo~Yang, and Fangyang Zheng.
\newblock The set of all orthogonal complex structures on the flat 6-tori.
\newblock {\em Advances in Mathematics}, 319:451--471, 2017.

\bibitem[LU17]{LU}
Adela Latorre and Luis Ugarte.
\newblock On non-{K}\"ahler compact complex manifolds with balanced and
  astheno-{K}\"ahler metrics.
\newblock {\em Comptes Rendus Mathematique}, 355(1):90--93, 2017.

\bibitem[Mal69]{BM}
Bernard Malgrange.
\newblock Sur l'int{\'e}grabilit{\'e} des structures presque-complexes.
\newblock {\em S{\'e}minaire Jean Leray}, pages 1--8, 1969.

\bibitem[McM11]{CM}
Curtis McMullen.
\newblock Riemann surfaces, dynamics and geometry.
\newblock 2011.

\bibitem[Mic82]{MLM}
ML~Michelsohn.
\newblock On the existence of special metrics in complex geometry.
\newblock {\em Acta Mathematica}, 149(1):261--295, 1982.

\bibitem[NN57]{NN}
August Newlander and Louis Nirenberg.
\newblock Complex analytic coordinates in almost complex manifolds.
\newblock {\em Annals of Mathematics}, pages 391--404, 1957.

\bibitem[Pop13]{DP}
Dan Popovici.
\newblock Stability of strongly {G}auduchon manifolds under modifications.
\newblock {\em Journal of Geometric Analysis}, 23(2):653--659, 2013.

\bibitem[Sal95]{Salamon1}
Simon Salamon.
\newblock Orthogonal complex structures.
\newblock {\em Proceedings of the 6th International Conference on Differential
  Geometry, Brno}, pages 103--117, 1995.

\bibitem[Som75]{Sommese}
Andrew~John Sommese.
\newblock Quaternionic manifolds.
\newblock {\em Mathematische Annalen}, 212(3):191--214, 1975.

\bibitem[YZ16]{YZ}
Bo~Yang and Fangyang Zheng.
\newblock On curvature tensors of {H}ermitian manifolds.
\newblock {\em arxiv preprint}, 2016.

\bibitem[YZ18]{YZ2}
Bo~Yang and Fang~Yang Zheng.
\newblock On compact {H}ermitian manifolds with flat {G}auduchon connections.
\newblock {\em Acta Mathematica Sinica, English Series}, 34:1259--1268, 2018.

\end{thebibliography}
\bibliographystyle{alpha}


\end{document}